\newcommand{\sr}[1]{{\color{black}{#1}}}
\begin{document}
\mainmatter              
\title{Robust parameter estimation using the ensemble Kalman filter}
\titlerunning{Frequentist perspective on estimation using the EnKF}  
%
\author{Sebastian Reich\inst{1}}
\authorrunning{Sebastian Reich} 
%
\tocauthor{Sebastian Reich}
\institute{University of Potsdam, Institute of Mathematics, Potsdam, Germany\\
\email{sebastian.reich@uni-postdam.de},
}

\maketitle              

\begin{abstract}
Standard maximum likelihood or Bayesian approaches to parameter estimation for stochastic differential equations are not robust to perturbations in the continuous-in-time data. In this paper, we give a rather elementary explanation of this observation in the context of continuous-time parameter estimation using an ensemble Kalman filter. We employ the frequentist perspective to shed new light on three  robust estimation techniques; namely subsampling the data, rough path corrections, and data filtering. 
We illustrate our findings through a simple numerical experiment.
\keywords{parameter estimation, stochastic differential equations, ensemble Kalman filter, frequentist approach, rough path theory}
\end{abstract}
\section{Introduction}
In this note, which is an extended version of \cite{Reich2021}, we consider the well-studied problem of parameter estimation for stochastic differential equations (SDEs) from \sr{continuous-time observations $X_t^\dagger$, $t\in [0,T]$} \cite{kutoyants2013statistical}. \sr{It is well-known that the corresponding maximum likelihood estimator does not depend continuously on the observations $X_t^\dagger$, $t \in [0,T]$, which can result in a systematic estimation bias \cite{PPS09,DFM2016}. In other words, the maximum likelihood estimator is not robust with respect to perturbations in the observations.} Here, we revisit this problem from the perspective of online  (time-continuous) parameter estimation \cite{crisan,crisan2013robust} using the popular ensemble Kalman filter (EnKF) and its continuous-time ensemble Kalman--Bucy filter (EnKBF) formulations \cite{evensen,CotterReich2013,nusken2019state}. \sr{As for the corresponding maximum likelihood approaches, the EnKBF does not depend continuously on the incoming observations $X_t^\dagger$, $t\ge 0$, with respect to the uniform norm topology on the space of continuous functions. This fact has been first  investigated in \cite{CNN2021} using rough path theory \cite{FrizHairer2020}. In particular, as already demonstrated for the related maximum likelihood estimator in \cite{DFM2016}, rough path theory allows one to specify an appropriately generalised topology which leads to a continuous dependence of the EnKBF estimators on the observations. Here we expand the analysis of \cite{CNN2021} to a frequentist analysis of the EnKBF in the spirit of \cite{RR20}, where the primary focus is on the expected behaviour of the EnKBF estimators over all admissible observation paths. One recovers that the discontinuous dependence of the EnKBF estimators on the driving observations results in a systematic bias from a frequentist perspective. This is also a well known fact for SDEs driven by multiplicative noise \cite{IW89}. 

The proposed frequentist perspective naturally enables the study of known bias correction methods, such as subsampling the data \cite{PPS09}, a recently proposed data filtering approach \cite{AGPSZ21}, as well as novel de-biasing approaches \cite{CNN2021} in the context of the EnKBF.}

In order to facilitate a rather elementary mathematical analysis, we consider only the very much simplified problem of parameter estimation for linear SDEs. This restriction allows us to avoid certain technicalities from rough path theory and enables a rather straightforward application of the numerical rough path approach put forward in \cite{davie2008differential}. As a result we are able to demonstrate that the popular approach of subsampling the data \cite{ait2005often,PPS09,azencott2013sub} can be well justified from a frequentist perspective. The frequentist perspective also suggests a rather natural approach to the estimation of the required correction term in the case an EnKBF is implemented without subsampling. 

We end this introductory paragraph with a reference to \cite{abdulle2020drift}, which includes a broad survey on alternative estimation techniques. We also point to \cite{CNN2021} for an in-depth discussion of rough path theory in connection to filtering and parameter estimation. 

The remainder of this paper is structured as follows. The problem setting and the EnKBF are introduced in the subsequent Section 
\ref{sec:EnKF}. The frequentist perspective and its implications on the specific implementations of an EnKBF in the context of low and high frequency data assimilation are laid out in Section \ref{sec:frequentist}. The importance of these considerations becomes transparent when applying the EnKBF to perturbed data in Section \ref{sec:robust}. Here again, we restrict attention to a rather simple model setting taken from \cite{friz2015physical} and also used in \cite{CNN2021}. As a result we build a clear connection between subsampling and the necessity for a correction term in the case high frequency data is assimilated directly. 
We also provide a discussion of the data filtering approach \cite{AGPSZ21} in the context of our simply model system. A brief numerical demonstration is provided in Section \ref{sec:numerics}, which is followed by a concluding remark in Section \ref{sec:conclusions}.

%
\section{Ensemble Kalman parameter estimation} \label{sec:EnKF}
%

We consider the SDE parameter estimation problem
\begin{equation} \label{eq:SDE_1}
{\rm d} X_t = f(X_t,\theta){\rm d}t + \gamma^{1/2} {\rm d}W_t
\end{equation}
subject to observations $X_t^\dagger $, $t\in [0,T]$, which arise from the reference system
\begin{equation} \label{eq:SDE_2}
{\rm d} X_t^\dagger  = f^\dagger (X_t^\dagger){\rm d}t + \gamma^{1/2} {\rm d}W^\dagger_t,
\end{equation}
where the unknown drift function $f^\dagger (x)$ typically satisfies $f^\dagger (x) = f(x,\theta^\dagger)$ and $\theta^\dagger$ 
denotes the true parameter value. Here we assume for simplicity that the unknown parameter is scalar-valued and that the state 
variable is $d$-dimensional with $d\ge 1$. Furthermore, $W_t$ and $W_t^\dagger$ denote independent standard $d$-dimensional Brownian motions and $\gamma>0$ is the (known) diffusion constant.

Following the Bayesian paradigm, we treat the unknown parameter as a random variable $\Theta$. Furthermore, we apply a sequential approach and update $\Theta$ with the incoming data $X^\dagger_t$ as a function of time. \sr{Hence we introduce the
random variable $\Theta_t$ which obeys the Bayesian posterior distribution given all observations $X_\tau^\dagger$, $\tau \in 
[0,t]$, up to time $t>0$. Furthermore, instead of exactly solving the time-continuous Bayesian inference problem as specified by the associated Kushner--Stratonovitch equation \cite{crisan,nusken2019state}, we define the time evolution of $\Theta_t$}
by an application of the (deterministic) ensemble Kalman--Bucy filter (EnKBF) mean-field equations
\cite{CotterReich2013,nusken2019state}, which take the form
\begin{subeqnarray} \label{eq:EnKBF_1}
{\rm d} \Theta_t &=& \gamma^{-1} \pi_t \left[(\theta - \pi_t[\theta]) \otimes f(X_t^\dagger ,\theta) \right] {\rm d} I_t,\\
{\rm d} I_t &=& {\rm d}X_t^\dagger 
- \frac{1}{2} \left( f(X_t^\dagger,\Theta_t ) + \pi_t[f(X^\dagger_t,\theta)] \right) {\rm d}t ,
\end{subeqnarray}
where $\pi_t$ denotes the probability density function (PDF) of $\Theta_t$ and $\pi_t[g]$ the associated expectation value of a function $g(\theta)$. The column vector $I_t$, defined by (\ref{eq:EnKBF_1}b), is called the innovation, 
while the row vector
\begin{equation}
K_t(\pi_t) =  \gamma^{-1} \pi_t \left[(\theta - \pi_t[\theta]) \otimes f(X_t^\dagger ,\theta) \right] ,
\end{equation}
premultiplying the innovation in (\ref{eq:EnKBF_1}a) is called the gain. \sr{Here the notation $a \otimes b = ab^{\rm T}$, where $a,b$ can be any two column vectors, has been used.} The initial condition $\Theta_0 \sim \pi_0$ is provided by the prior PDF of the unknown parameter.

\sr{A Monte-Carlo implementation of the mean-field equations (\ref{eq:EnKBF_1}) leads to the interacting particle system
\begin{subeqnarray} \label{eq:EnKBF_1M}
{\rm d} \Theta_t^{(i)} &=& \gamma^{-1} \pi_t^M \left[(\theta - \pi_t^M[\theta]) \otimes f(X_t^\dagger ,\theta) \right] {\rm d} I_t^{(i)},\\
{\rm d} I_t^{(i)} &=& {\rm d}X_t^\dagger 
- \frac{1}{2} \left( f(X_t^\dagger,\Theta_t^{(i)} ) + \pi_t^M [f(X^\dagger_t,\theta)] \right) {\rm d}t ,
\end{subeqnarray}
$i = 1,\ldots,M$, where expectations are now taken with respect to the empirical measure. That is,
\begin{equation}
\pi_t^M [g] = \frac{1}{M} \sum_{i=1}^M g(\Theta_t^{(i)})
\end{equation}
for given function $g(\theta)$, and all Monte-Carlo samples are driven by the same (fixed) observations $X_t^\dagger$.
The initial samples $\Theta_0^{(i)}$, $i=1,\ldots,M$, are drawn identically and independently from the prior distribution 
$\pi_0$.}

We note in passing that there is also a stochastic variant of the innovation process \cite{nusken2019state} defined by
\begin{equation}
{\rm d} I_t = {\rm d}X_t^\dagger 
- f(X_t^\dagger,\Theta_t )  {\rm d}t - \gamma^{1/2}{\rm d}W_t ,
\end{equation}
\sr{which leads to the Monte-Carlo approximation
\begin{equation}
{\rm d} I_t^{(i)} = {\rm d}X_t^\dagger 
- f(X_t^\dagger,\Theta_t^{(i)} ) {\rm d}t - \gamma^{1/2} {\rm d}W_t^{(i)} 
\end{equation}
of the innovation in (\ref{eq:EnKBF_1M}).}

\begin{remark}
\sr{There is an intriguing connection to the stochastic gradient descent approach to the estimation of $\theta^\dagger$,
as proposed in \cite{SS17}, which is written as
\begin{subeqnarray} \label{eq:SGD}
{\rm d}\theta_t &=& \frac{\alpha_t}{\gamma} \nabla_\theta f(X_t^\dagger,\theta_t){\rm d}\tilde I_t,\\
{\rm d}\tilde I_t &=& {\rm d}X_t^\dagger - f(X_t^\dagger,\theta_t){\rm d}t
\end{subeqnarray}
in our notation, where $\alpha_t >0$ denotes the learning rate. We note that (\ref{eq:SGD}) shares with (\ref{eq:EnKBF_1}) the gain times innovation structure. However, while (\ref{eq:EnKBF_1}) approximates the Bayesian inference problem, formulation (\ref{eq:SGD}) treats the parameter estimation problem from an optimisation perspective. Both formulations share, however, the discontinuous dependence on the observation path $X_t^\dagger$, and the proposed frequentist analysis of the EnKBF 
(\ref{eq:EnKBF_1}) also applies in simplified form to (\ref{eq:SGD}). We also point out that (\ref{eq:EnKBF_1}) is affine invariant \cite{GINR19} and does not require the computation of partial derivatives.}
\end{remark}

\medskip

\noindent
We now state a numerical implementation with step-size $\Delta t>0$ and denote the resulting 
numerical approximations at $t_n = n\Delta t$ by $\Theta_n\sim \pi_n$, $n \ge 1$. \sr{While a standard Euler--Maruyama approximation could be applied, the following stable} discrete-time mean-field formulation of the 
EnKBF
\begin{equation} \label{eq:EnKF_1}
\Theta_{n+1} = \Theta_n + K_n \left\{ (X_{t_{n+1}}^\dagger - X_{t_n}^\dagger)  
- \frac{1}{2} \left( f(X_{t_n}^\dagger,\Theta_n ) + \pi_n[f(X^\dagger_{t_n},\theta)] \right)\Delta t \right\} 
\end{equation}
 is inspired by \cite{akir11} with Kalman gain
\begin{subeqnarray}
K_n &=& \pi_n \left[(\theta - \pi_n[\theta]) \otimes f(X_{t_n}^\dagger ,\theta)\right] \times \\
& &\quad 
\left( \gamma + \Delta t \pi_n \left[ \left(f(X_{t_n}^\dagger ,\theta) -
\pi_n [f(X_{t_n}^\dagger ,\theta)] \right)  \otimes f(X_{t_n}^\dagger ,\theta)\right] \right)^{-1}.
\end{subeqnarray}
\sr{It is straightforward to combine this time discretisation with the Monte-Carlo approximation (\ref{eq:EnKBF_1M}) in order to
obtain a complete numerical implementation of the EnKBF.}

\begin{remark}
The rough path analysis of the EnKBF presented in \cite{CNN2021} is based on a Stratonovich reformulation 
of (\ref{eq:EnKBF_1}) and its appropriate time discretisation. Here we follow the It\^o/Euler--Maruyama formulation of the data-driven term in (\ref{eq:EnKBF_1}), 
\begin{equation} \label{eq:Ito_interpretation}
\int_0^T g(X_t^\dagger,t)\,{\rm d}X_t^\dagger = \lim_{\Delta t\to 0} \sum_{i=1}^L g(X_{t_n}^\dagger,t_n)(X_{t_{n+1}}^\dagger -
X_{t_n}^\dagger )
\end{equation}
for any continuous function $g(x,t)$ and $\Delta t=T/L$, as it corresponds to standard implementation of the EnKBF 
and is easier to analyse in the context of this paper.
\end{remark}

\medskip

\noindent
The EnKBF provides only an approximate solution to the Bayesian inference problem for general nonlinear $f(x,\theta)$.
However, it becomes exact in the mean-field limit for affine drift functions $f(x,\theta) = \theta Ax + Bx + c$. 

\begin{example}
Consider the stochastic partial differential equation
\begin{equation}
\partial_t u = -U\partial_y u  + \rho \partial_y^2 u + \dot{\mathcal{W}}
\end{equation}
over a periodic spatial domain $y \in [0,L)$, where $\mathcal{W}(t,y)$ denotes space-time white noise, $U\in \mathbb{R}$, and $\rho>0$ are given parameters. A standard finite-difference discretisation in space with $d$ grid points and mesh-size $\Delta y$ leads to a linear system of SDEs of the form
\begin{equation}
{\rm d}{\bf u}_t = -(U D +  \rho D D^{\rm T}){\bf u}_t{\rm d}t + \Delta y^{-1/2} {\rm d}W_t,
\end{equation}
where ${\bf u}_t \in \mathbb{R}^d$ denotes the vector of grid approximations at time $t$, $D \in \mathbb{R}^{d\times d}$ 
a finite difference approximation of the spatial derivative $\partial_y$, and $W_t$ the standard $d$-dimensional Brownian motion. We can now set $X_t={\bf u}_t$, $\gamma = \Delta y^{-1}$ and identify either $\theta = U$ or $\theta = \rho$ as the unknown parameter in order to obtain an SDE of the form (\ref{eq:SDE_1}).
\end{example}

\medskip

\noindent
In this note, we further simplify our given inference problem to the case
\begin{equation}
f(x,\theta) = \theta Ax\,,
\end{equation}
where $A \in \mathbb{R}^{d\times d}$ is a normal matrix with eigenvalues in the left half plane. That is $\sigma(A) \subset \mathbb{C}_-$.  The reference parameter value is set to $\theta^\dagger = 1$. Hence the SDE (\ref{eq:SDE_2}) possesses a Gaussian invariant measure with mean zero and covariance matrix 
\begin{equation} \label{eq:C}
C = -\gamma (A + A^{\rm T})^{-1}.
\end{equation}
We assume from now on that the observations $X_t^\dagger$ are realisations of (\ref{eq:SDE_2}) with initial condition $X_0^\dagger \sim {\rm N}(0,C)$.

Under these assumptions, the EnKBF (\ref{eq:EnKBF_1}) simplifies drastically, and we obtain
\begin{subeqnarray} \label{eq:EnKBF_2}
{\rm d} \Theta_t &=&\frac{\sigma_t}{\gamma} (A X^\dagger_t)^{\rm T} {\rm d}I_t,\\
{\rm d} I_t &=& {\rm d}X_t^\dagger 
- \frac{1}{2} \left( \Theta_t + \pi_t[\theta] \right) A X_t^\dagger {\rm d}t ,
\end{subeqnarray}
with variance
\begin{equation}
\sigma_t = \pi_t \left[(\theta - \pi_t[\theta])^2  \right] .
\end{equation}

\begin{remark}
\sr{For completeness, we state the corresponding formulation for the stochastic gradient descent approach
(\ref{eq:SGD}):
\begin{subeqnarray} \label{eq:SGD_2}
{\rm d}\theta_t &=& \frac{\alpha_t}{\gamma} (A X_t^\dagger)^{\rm T} {\rm d}\tilde I_t,\\
{\rm d}\tilde I_t &=& {\rm d}X_t^\dagger - \theta_tA X_t^{\dagger}{\rm d}t.
\end{subeqnarray}
We find that the learning rate $\alpha_t$ takes the role of the variance $\sigma_t$ in (\ref{eq:EnKBF_2}). 
However, we emphasise again that the same pathwise stochastic integrals arise from both formulations, and therefore,
the same robustness issue of the resulting estimators $\theta_t$, $t > 0$, arises.}
\end{remark}

\medskip

\noindent
Similarly, the discrete-time mean-field EnKBF (\ref{eq:EnKF_1}) reduces to
\begin{equation} \label{eq:EnKF_2}
\Theta_{n+1} = \Theta_n + K_n \left\{ (X_{t_{n+1}}^\dagger - X_{t_n}^\dagger)  
- \frac{1}{2} \left( \Theta_n  + \pi_n[\theta] \right) A X_{t_n}^\dagger \Delta t \right\} 
\end{equation}
with Kalman gain
\begin{equation} \label{eq:gain_20}
K_n = \sigma_n (AX_{t_n}^\dagger)^{\rm T} \left( \gamma + \Delta t \sigma_n (AX_{t_n}^\dagger)^{\rm T} AX_{t_n}^\dagger
\right)^{-1}\,.
\end{equation}
Furthermore, since $X_t^\dagger \sim {\rm N}(0,C)$, 
\begin{equation} \label{eq:approx_1}
(AX_t^\dagger)^{\rm T} A X_t^\dagger = (A^{\rm T}A) : (X_t^\dagger \otimes  X_t^\dagger) \approx (A^{\rm T}A):C
\end{equation}
for $d\gg 1$, and we may simplify the Kalman gain to
\begin{equation}
K_n = \sigma_n \,(AX_{t_n}^\dagger)^{\rm T} \left( \gamma + \Delta t \sigma_n  \,(A^{\rm T}A) : C\right)^{-1}.
\end{equation}
\sr{Here we have used the notation $A:B = \mbox{tr} (A^{\rm T} B)$ to denote the Frobenius inner product of two matrices $A,B\in \mathbb{R}^{d\times d}$.} The approximation (\ref{eq:approx_1}) becomes exact in the limit $d\to \infty$, which we will frequently assume in the following section.  \sr{Please note that
\begin{equation} \label{eq:gain_2}
K_n = \frac{\sigma_n}{\gamma} \,(AX_{t_n}^\dagger)^{\rm T} + \mathcal{O}(\Delta t)
\end{equation}
under the stated assumptions.}

\begin{remark}
The Stratonovitch reformulation of (\ref{eq:EnKBF_2}) replaces (\ref{eq:EnKBF_2}a) by
\begin{equation} \label{eq:Strat_formulation}
{\rm d} \Theta_t =\frac{\sigma_t}{\gamma} \left\{ (A X^\dagger_t)^{\rm T} \circ {\rm d}I_t - 
\frac{\gamma}{2} \mbox{tr} \,(A)\,{\rm d}t\right\}.
\end{equation}
The innovation $I_t$ remains as before. See Appendix B of \cite{CNN2021} for more details. An appropriate time discretisation of the innovation-driven term replaces the Kalman gain (\ref{eq:gain_20}) by
\begin{equation}
K_{n+1/2} = \sigma_n (AX_{t_{n+1/2}}^\dagger)^{\rm T} \left( \gamma + \Delta t \sigma_n (AX_{t_{n+1/2}}^\dagger)^{\rm T} AX_{t_{n+1/2}}^\dagger \right)^{-1},
\end{equation}
where
\begin{equation}
X_{t_{n+1/2}}^\dagger = \frac{1}{2} (X_{t_n}^\dagger + X_{t_{n+1}}^\dagger)\,.
\end{equation}
Please note that a midpoint discretisation of the data-driven term in (\ref{eq:Strat_formulation}) results in
\begin{subeqnarray}
(AX_{t_{n+1/2}}^\dagger )^{\rm T}  (X_{t_{n+1}}^\dagger - X_{t_n}^\dagger) &=&
(AX_{t_n}^\dagger )^{\rm T}  (X_{t_{n+1}}^\dagger - X_{t_n}^\dagger) \,\,+ \\
& & \,\,\,\frac{1}{2} A^{\rm T} :
(X_{t_{n+1}}^\dagger - X_{t_n}^\dagger) \otimes (X_{t_{n+1}}^\dagger - X_{t_n}^\dagger)
\end{subeqnarray}
and that
\begin{equation} \label{eq:Strat_corr}
\frac{1}{2} A^{\rm T} :
(X_{t_{n+1}}^\dagger - X_{t_n}^\dagger) \otimes (X_{t_{n+1}}^\dagger - X_{t_n}^\dagger) \approx
\frac{\Delta t \,\gamma}{2} \mbox{tr}\,(A),
\end{equation}
which justifies the additional drift term in (\ref{eq:Strat_formulation}). A precise meaning of the approximation in (\ref{eq:Strat_corr}) will be given in Remark \ref{rem:3} below. 
\end{remark}

\medskip

\noindent
Alternatively, if one wishes to explicitly utilise the availability of continuous-time data $X^\dagger_t$, one could apply the following variant of (\ref{eq:EnKF_2}):
\begin{equation} \label{eq:EnKF_2b}
\Theta_{n+1} = \Theta_n + \frac{\sigma_n}{\gamma} \int_{t_n}^{t_{n+1}} (AX_t^\dagger)^{\rm T} {\rm d} X_t^{\dagger} - \frac{1}{2} K_n A X_{t_n}^\dagger \left( \Theta_n  + \pi_n[\theta] \right) \Delta t ,
\end{equation}
and following the It\^o/Euler--Maruyama approximation (\ref{eq:Ito_interpretation}), discretise the integral with a small inner step-size $\Delta \tau = \Delta t/L$, $L\gg 1$; that is,
\begin{equation} \label{eq:inner_2b}
\int_{t_n}^{t_{n+1}} (AX_t^\dagger)^{\rm T} {\rm d} X_t^{\dagger} \approx
\sum_{l=0}^{L-1} (AX_{\tau_l}^\dagger)^{\rm T} (X_{\tau_{l+1}}^{\dagger}-X_{\tau_l}^\dagger)
\end{equation}
with $\tau_l = t_n + l\Delta \tau$.  We note that
\begin{subeqnarray} \label{eq:rough_path}
\sum_{l=0}^{L-1} (AX_{\tau_l}^\dagger)^{\rm T} (X_{\tau_{l+1}}^{\dagger}-X_{\tau_l}^\dagger) &=&
(AX_{t_n}^\dagger)^{\rm T} (X_{t_{n+1}}^{\dagger}-X_{t_n}^\dagger) \,\,+ \\
& & \,\, A^{\rm T}:
\left( \sum_{l=0}^{L-1} (X_{\tau_l}^\dagger - X_{t_n}^\dagger) \otimes (X_{\tau_{l+1}}^{\dagger}-X_{\tau_l}^\dagger)\right),
\end{subeqnarray}
which is at the heart of rough path analysis \cite{davie2008differential} and which we utilise in the following section.


%
\section{Frequentist analysis} \label{sec:frequentist}
%

\sr{It is well-known that the second-order contribution in (\ref{eq:rough_path}) leads to a discontinuous dependence of the integral
on the observed $X_t^\dagger$ in the uniform norm topology on the space of continuous functions. Rough path theory fixes this problem by defining appropriately extended topologies and has been extended to the EnKBF in \cite{CNN2021}. In this section, we complement the path-wise analysis from \cite{CNN2021} by an analysis of the impact of second-order contribution on the EnKBF (\ref{eq:EnKBF_2}) from a frequentist perspective, which analyses the behaviour of EnKBF over all possible observations $X_t^\dagger$ subject to (\ref{eq:SDE_2}). In other words, one switches from a strong solution concept to a weak one. While we assume that the observations satisfy (\ref{eq:SDE_2}), throughout this section, we will analyse the impact of a perturbed observation process on the EnKBF in Section \ref{sec:robust}.}

\sr{We first derive evolution equations for the conditional mean and variance under the assumption that $\Theta_0$ is Gaussian distributed with given prior mean $m_{\rm prior}$  and variance $\sigma_{\rm prior}$.}  \sr{It follows directly from (\ref{eq:EnKBF_2}) that the conditional mean $\mu_t = \pi_t[\theta]$, that is the mean of $\Theta_t$, satisfies the SDE}
\begin{equation} 
{\rm d}\mu_t = \frac{\sigma_t}{\gamma} \left( (A X_t^\dagger)^{\rm T} {\rm d}X^\dagger_t - \mu_t \,(A^{\rm T}A) : (X_t^\dagger \otimes X_t^\dagger) \,{\rm d}t\right),
\end{equation}
which simplifies to 
\begin{equation} \label{eq:CM_1}
{\rm d}\mu_t = \frac{\sigma_t}{\gamma} \left( (A X_t^\dagger)^{\rm T} {\rm d}X^\dagger_t - \mu_t \,(A^{\rm T}A) : C \,{\rm d}t\right),
\end{equation}
under the approximation (\ref{eq:approx_1}). The initial condition is $\mu_0 = m_{\rm prior}$. 
The evolution equation for the conditional variance, \sr{that is the variance of $\Theta_t$}, is given by
\begin{equation}
\frac{\rm d}{{\rm d}t} \sigma_t = - \frac{\sigma_t^2}{\gamma} \,(A^{\rm T}A):(X_t^\dagger \otimes X_t^\dagger)
\end{equation}
with initial condition $\sigma_0 = \sigma_{\rm prior}$ and which again reduces to 
\begin{equation} \label{eq:variance_1}
\frac{\rm d}{{\rm d}t} \sigma_t = - \frac{\sigma_t^2}{\gamma} \,(A^{\rm T}A):C 
\end{equation}
under the approximation (\ref{eq:approx_1}).

We now perform a frequentist analysis of the estimator $\mu_t$ defined by (\ref{eq:CM_1}) and (\ref{eq:variance_1}), \sr{that is,
we perform a weak analysis of the SDE (\ref{eq:CM_1}) in terms of the first two moments of $\mu_t$ \cite{RR20}.} In the first step, we take the expectation of (\ref{eq:CM_1}) over all realisations $X_t^\dagger$ of the SDE (\ref{eq:SDE_2}), which we denote by 
\begin{equation}
m_t :=\mathbb{E}^\dagger[\mu_t] .
\end{equation} 
The associated evolution equation is given by
\begin{equation}
\frac{\rm d}{{\rm d}t} m_t = 
\frac{\sigma_t}{\gamma} \,(A^{\rm T} A): \mathbb{E}^\dagger \left[X_t^\dagger \otimes X_t^\dagger\right]
-\frac{\sigma_t}{\gamma} \,(A^{\rm T} A) : C \, m_t   ,
\end{equation}
which reduces to
\begin{equation} \label{eq:mean_1}
\frac{\rm d}{{\rm d}t} m_t = \frac{\sigma_t}{\gamma} \,(A^{\rm T}A):C \,(1- m_t) =
\sigma_t \,(A^{\rm T}A): (A + A^{\rm T})^{-1}\,(1-m_t) .
\end{equation}

In the second step, we also look at the frequentist variance
\begin{equation} \label{eq:frequentist_UQ_1}
p_t := \mathbb{E}^\dagger [(\mu_t-m_t)^2] .
\end{equation}
Using
\begin{subeqnarray}
{\rm d}(\mu_t-m_t) &=& \frac{\sigma_t}{\gamma} \left\{
(A^{\rm T}A): \left( X_t^\dagger \otimes X_t^\dagger - C \right){\rm d}t + \gamma^{1/2}
(AX_t^\dagger)^{\rm T} {\rm d}W^\dagger_t \right\} \,\,-\\
& & \qquad \qquad \frac{\sigma_t}{\gamma} (A^{\rm T} A): C \,(\mu_t-m_t){\rm d}t ,
\end{subeqnarray}
we obtain
\begin{subeqnarray}
\frac{\rm d}{{\rm d}t} p_t &=& -\frac{\sigma_t}{\gamma} \,(A^{\rm T}A):C \left(2p_t-\sigma_t\right)\,\,+\\
& &\qquad \qquad \frac{2\sigma_t}{\gamma} \,(A^{\rm T} A): \mathbb{E}^\dagger \left[(X_t^\dagger \otimes X_t^\dagger-C) 
\,(\mu_t-m_t)\right] ,
\end{subeqnarray}
which we simplify to
\begin{equation} \label{eq:frequentist_UQ_1}
\frac{\rm d}{{\rm d}t} p_t = \frac{\sigma_t}{\gamma} \,(A^{\rm T}A):C \left(\sigma_t - 2p_t \right) =
\sigma_t \,(A^{\rm T}A): (A + A^{\rm T})^{-1} \left(\sigma_t - 2p_t\right) 
\end{equation}
under the approximation (\ref{eq:approx_1}). The initial conditions are $m_0 = m_{\rm prior}$ and $p_0 = 0$, respectively. We note that the differential equations (\ref{eq:variance_1}) and (\ref{eq:frequentist_UQ_1}) are explicitly solvable. For example, it holds that
\begin{equation} \label{eq:sigma_exact}
\sigma_t = \frac{\sigma_0}{1 + (A^{\rm T}A) : (A^{\rm T} + A)^{-1}\, \sigma_0 t}
\end{equation}
and one finds that $\sigma_t \sim 1 /((A^{\rm T}A) :(A^{\rm T}+A)^{-1}\,t)$ for $t \gg 1$. It can also be shown that $p_t \le \sigma_t$ for all $t \ge 0$. \sr{Furthermore, this analysis suggests that the learning rate in the stochastic gradient descent formulation (\ref{eq:SGD_2}) should be chosen as
\begin{equation}
\alpha_t = \min \left\{ \bar \alpha, \frac{1}{(A^{\rm T}A) :(A^{\rm T}+A)^{-1}\,t} \right\} ,
\end{equation}
where $\bar \alpha >0$ denotes an initial learning rate; for example $\bar \alpha = \sigma_0$.}

We finally conduct a formal analysis of the ensemble Kalman filter time-stepping (\ref{eq:EnKF_2}) and demonstrate that the method is first-order accurate with regard to the implied frequentist mean $m_t$. We recall (\ref{eq:gain_2})
and conclude from (\ref{eq:EnKF_2}) that the implied update on the variance $\sigma_n$ satisfies
\begin{equation} \label{eq:var_update}
\sigma_{n+1} = \sigma_n - \frac{\sigma_n^2}{\gamma} \,(A^{\rm T} A):C \Delta t + \mathcal{O}(\Delta t^2) ,
\end{equation}
which provides a first-order approximation to (\ref{eq:variance_1}). 

We next analyse the evolution equation (\ref{eq:CM_1}) for the conditional mean $\mu_t$ and its numerical approximation
\begin{equation} \label{eq:CM_2}
\mu_{n+1} = \mu_n + K_n \left\{ (X_{t_{n+1}}^\dagger -X_{t_n}^\dagger) - \mu_n AX_{t_n}^\dagger \Delta t\right\}
\end{equation}
arising from (\ref{eq:EnKF_2}). Here we follow \cite{davie2008differential} in order to analyse the impact of the data $X_t^\dagger$ on the estimator. An in-depth theoretical treatment can be found in \cite{CNN2021}.

Comparing (\ref{eq:CM_2}) to (\ref{eq:CM_1}) and utilising (\ref{eq:gain_2}), we find that the key quantity of 
interest is
\begin{equation} \label{eq:integral}
J^\dagger_{t_n,t_{n+1}} := \int_{t_n}^{t_{n+1}} (AX_t^\dagger)^{\rm T} {\rm d}X_t^\dagger ,
\end{equation}
which we can rewrite as
\begin{equation} 
J^\dagger_{t_n,t_{n+1}} = A^{\rm T}: (X^\dagger_{t_n} \otimes X^\dagger_{t_n,t_{n+1}}) + A^{\rm T} : \mathbb{X}_{t_n,t_{n+1}}^\dagger\,.
\end{equation}
Here, motivated by (\ref{eq:rough_path}) and following standard rough path notation, we have used
\begin{equation}
X^\dagger_{t_n,t_{n+1}} := X_{t_{n+1}}^\dagger -X_{t_n}^\dagger 
\end{equation}
and the second-order iterated It\^o integral
\begin{equation} \label{eq:SE_I}
\mathbb{X}_{t_n,t_{n+1}}^\dagger := \int_{t_n}^{t_{n+1}} (X^\dagger_t - X^\dagger_{t_n})\otimes {\rm d}X_t^\dagger .
\end{equation}
The difference between the integral (\ref{eq:integral}) and its corresponding approximation in (\ref{eq:CM_2}) is provided by
$A^{\rm T} : \mathbb{X}_{t_n,t_{n+1}}^\dagger$ plus higher-order terms arising from (\ref{eq:gain_2}). The iterated integral $\mathbb{X}^\dagger_{t_n,t_{n+1}}$ becomes a random variable from the frequentist perspective. Taking note of (\ref{eq:SDE_2}), we find that the drift, $f(x) = Ax$, contributes with terms of order $\mathcal{O}(\Delta t^2)$ to $\mathbb{X}^\dagger_{t_n,t_{n+1}}$ and the expected value of $\mathbb{X}^\dagger_{t_n,t_{n+1}}$ therefore satisfies
\begin{equation} \label{eq:estimate_4}
\mathbb{E}^\dagger [\mathbb{X}^\dagger_{t_n,t_{n+1}}] =  \mathcal{O}(\Delta t^2) ,
\end{equation}
since $\mathbb{E}^\dagger [W^\dagger_{t_n,\tau}]= 0$ for $\tau > t_n$, and
\begin{equation}
\mathbb{E}^\dagger [\mathbb{W}_{t_n,t_{n+1}}^\dagger] = \frac{1}{2} \mathbb{E}^\dagger [
W^\dagger_{t_n,t_{n+1}} \otimes W^\dagger_{t_n,t_{n+1}} - [W^\dagger_{t_n},W^\dagger_{t_n,t_{n+1}}]  ] 
- \frac{\Delta t}{2}I = 0 ,
\end{equation}
where we have introduced the commutator
\begin{equation} \label{eq:commutator}
 [W_{t_n}^\dagger, W_{t_n,t_{n+1}}^\dagger] := W^\dagger_{t_n}\otimes W^\dagger_{t_n,t_{n+1}} -
W^\dagger_{t_n,t_{n+1}}\otimes W_{t_n}^\dagger .
\end{equation}
Hence we find that, while (\ref{eq:CM_2}) is not a first-order (strong) approximation of the SDE (\ref{eq:CM_1}), the approximation becomes first-order in $m_t$ when averaged over realisations $X_t^\dagger$ of the SDE (\ref{eq:SDE_2}). More precisely, one obtains
\begin{equation}
\mathbb{E}^\dagger [J^\dagger_{t_n,t_{n+1}}] = (A^{\rm T}A) : C \Delta t + \mathcal{O}(\Delta t^2) .
\end{equation}

We note that the modified scheme (\ref{eq:EnKF_2b}) leads to the same time evolution in the variance $\sigma_n$ while the update in $\mu_n$ is changed to
\begin{equation} \label{eq:CM_2b}
\mu_{n+1} = \mu_n +  \frac{\sigma_n}{\gamma} 
\int_{t_n}^{t_{n+1}} (AX_t^\dagger)^{\rm T} {\rm d} X_t^{\dagger} - K_n A X_{t_n}^\dagger \mu_n \Delta t .
\end{equation}
This modification results in a more accurate evolution in the conditional mean $\mu_n$, but because of (\ref{eq:estimate_4}) 
it does not impact to leading order the evolution of the underlying frequentist mean, $m_n = \mathbb{E}^\dagger [\mu_n]$. 
We summarise our findings in the following proposition.

\begin{proposition}
\sr{The discrete-time EnKBF implementations (\ref{eq:EnKF_2}) and (\ref{eq:EnKF_2b}) both provide first-order approximations
to the time evolution of the frequentist mean, $m_t$, and the frequentist variance, $p_t$. In other words, both methods converge weakly with order one.}
\end{proposition}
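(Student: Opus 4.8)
The plan is to verify that the one-step maps of both schemes reproduce the explicit Euler increments of the deterministic moment equations (\ref{eq:variance_1}), (\ref{eq:mean_1}) and (\ref{eq:frequentist_UQ_1}) up to a local error $\mathcal{O}(\Delta t^2)$, and then to close the global estimate by discrete Gronwall. Since (\ref{eq:EnKF_2}) and (\ref{eq:EnKF_2b}) share the identical variance update and differ only in the data-driven mean term, I would analyse (\ref{eq:EnKF_2}) in full and then observe that passing to the full integral in (\ref{eq:CM_2b}) perturbs the frequentist quantities only at order $\mathcal{O}(\Delta t^2)$. The variance $\sigma_t$ is the easy case: under (\ref{eq:approx_1}) it is deterministic and solves the explicit ODE (\ref{eq:variance_1}) with closed form (\ref{eq:sigma_exact}), and the update (\ref{eq:var_update}) is precisely its explicit Euler step with local error $\mathcal{O}(\Delta t^2)$; this step is identical for both schemes.

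For the frequentist mean I would take $\mathbb{E}^\dagger$ of the map (\ref{eq:CM_2}), insert the gain expansion (\ref{eq:gain_2}), and use the rough-path splitting $J^\dagger_{t_n,t_{n+1}} = A^{\rm T}:(X^\dagger_{t_n}\otimes X^\dagger_{t_n,t_{n+1}}) + A^{\rm T}:\mathbb{X}^\dagger_{t_n,t_{n+1}}$. Scheme (\ref{eq:CM_2}) keeps only the first, non-iterated summand, whereas the exact mean SDE (\ref{eq:CM_1}) carries the full $J^\dagger$; by (\ref{eq:estimate_4}) the discarded iterated integral has expectation $\mathcal{O}(\Delta t^2)$. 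Conditioning the increment $X^\dagger_{t_n,t_{n+1}}$ on the history and using stationarity $\mathbb{E}^\dagger[X^\dagger_{t_n}\otimes X^\dagger_{t_n}] = C$ then gives $\mathbb{E}^\dagger[K_n X^\dagger_{t_n,t_{n+1}}] = \frac{\sigma_n}{\gamma}(A^{\rm T}A):C\,\Delta t + \mathcal{O}(\Delta t^2)$, while the correction term contributes $-\frac{\sigma_n}{\gamma}(A^{\rm T}A):C\,m_n\Delta t$; together these are exactly an Euler step for (\ref{eq:mean_1}). Because (\ref{eq:CM_2b}) uses the full $J^\dagger$, it merely reassigns the $\mathcal{O}(\Delta t^2)$ iterated-integral term from the truncation error into the scheme, leaving the leading frequentist-mean dynamics unchanged.

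For the frequentist variance I would propagate the fluctuation $\mu_n - m_n$. To leading order the recursion reads $\mu_{n+1}-m_{n+1} = \left(1 - \frac{\sigma_n}{\gamma}(A^{\rm T}A):C\,\Delta t\right)(\mu_n-m_n) + \xi_n + \mathcal{O}(\Delta t^2)$, where $\xi_n := \frac{\sigma_n}{\gamma}(AX^\dagger_{t_n})^{\rm T}\left(X^\dagger_{t_n,t_{n+1}} - \mathbb{E}^\dagger[X^\dagger_{t_n,t_{n+1}}\mid\mathcal{F}_{t_n}]\right)$ is the mean-zero data fluctuation whose leading part is the martingale increment $\gamma^{1/2}W^\dagger_{t_n,t_{n+1}}$. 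Squaring and taking $\mathbb{E}^\dagger$, independence of this increment from $\mathcal{F}_{t_n}$ kills the cross term to the required order, while $\mathbb{E}^\dagger[\xi_n^2] = \frac{\sigma_n^2}{\gamma}(A^{\rm T}A):C\,\Delta t + \mathcal{O}(\Delta t^2)$ by (\ref{eq:approx_1}) applied to the quadratic variation of $X^\dagger$. The resulting recursion $p_{n+1} = p_n + \frac{\sigma_n}{\gamma}(A^{\rm T}A):C\,(\sigma_n - 2p_n)\Delta t + \mathcal{O}(\Delta t^2)$ is exactly the Euler step for (\ref{eq:frequentist_UQ_1}).

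It remains to turn these $\mathcal{O}(\Delta t^2)$ local errors into a global $\mathcal{O}(\Delta t)$ bound. Both moment systems (\ref{eq:mean_1}) and (\ref{eq:frequentist_UQ_1}) are affine in $(m_t,p_t)$ with coefficients bounded on $[0,T]$, the factor $\sigma_t$ being explicitly bounded and decaying through (\ref{eq:sigma_exact}); hence the one-step maps are stable up to a factor $1+\mathcal{O}(\Delta t)$, and summing the local errors over the $T/\Delta t$ steps with discrete Gronwall yields weak order one for both $m_t$ and $p_t$. The step I expect to be the main obstacle is the variance estimate: one must confirm that the residual correlation between $\xi_n$ and the history $\mu_n - m_n$ --- entering only through the drift part of the increment $X^\dagger_{t_n,t_{n+1}}$ --- contributes at $\mathcal{O}(\Delta t^2)$, and that the approximation (\ref{eq:approx_1}), which is exact only as $d\to\infty$, can be controlled uniformly across the steps when computing $\mathbb{E}^\dagger[\xi_n^2]$.
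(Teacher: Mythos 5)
Your proposal is correct and follows essentially the same route as the paper: you recognise the variance update as an Euler step for (\ref{eq:variance_1}), and you base the mean analysis on the rough-path splitting of $J^\dagger_{t_n,t_{n+1}}$ together with the key estimate $\mathbb{E}^\dagger[\mathbb{X}^\dagger_{t_n,t_{n+1}}] = \mathcal{O}(\Delta t^2)$ of (\ref{eq:estimate_4}), which is exactly how the paper argues that (\ref{eq:EnKF_2}) and (\ref{eq:EnKF_2b}) agree to leading order in the frequentist mean. Your explicit fluctuation recursion for $p_t$ and the discrete Gronwall closure simply spell out steps the paper leaves implicit (the paper's analysis of $p_t$ for the discrete schemes and the passage from local to global error are not written out), so there is no substantive difference in approach.
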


\medskip

\noindent
We also note that the frequentist uncertainty is essentially data-independent and depends only on the time window $[0,T]$ over which the data gets observed. Hence, for fixed observation interval $[0,T]$, it makes sense to choose the step-size $\Delta t$ such that the discretisation error (bias) remains on the same order of magnitude as $p_T^{1/2} \approx \sigma_T^{1/2}$. Selecting a much smaller step-size would not significantly reduce the frequentist estimation error in the conditional estimator $\mu_T$.  

\begin{remark} \label{rem:3}
We can now give a precise reformulation of the approximation (\ref{eq:Strat_corr}):
\begin{equation}
\frac{1}{2} \mathbb{E}^\dagger \left[ A^{\rm T}: (X_{t_n,t_{n+1}}^\dagger \otimes X_{t_n,t_{n+1}}^\dagger )\right] 
= \frac{\Delta t \,\gamma}{2} \mbox{tr}\,(A) + \mathcal{O}(\Delta t^2),
\end{equation}
which is at the heart of the Stratonovich formulation (\ref{eq:Strat_formulation}) of the EnKFB \cite{CNN2021}.
\end{remark}


%
\section{Multi-scale data} \label{sec:robust}
%

\sr{We now have all the material in place to study the dependency of the EnKBF estimator on a set of observations 
$X_t^{(\epsilon)}$, $\epsilon >0$, which approach the theoretical $X_t^\dagger$ with respect to the uniform norm topology 
on the space of continuous functions as $\epsilon \to 0$. Since the second-order contribution in (\ref{eq:rough_path}), that is 
(\ref{eq:SE_I}), does not depend continuously on such perturbations, we demonstrate in this section that a systematic 
bias arises in the EnKBF. Furthermore, we show how the bias can be eliminated either via subsampling the data, 
which effectively amounts to ignoring these second-order contributions, or via an appropriate correction term, which ensures a continuous dependence on observations $X_t^{(\epsilon)}$ with respect to the uniform norm topology.} More specifically, we investigate the impact of a possible discrepancy between the SDE model (\ref{eq:SDE_1}), for which we aim to estimate the parameter $\theta$, and the data generating SDE (\ref{eq:SDE_2}). We therefore replace (\ref{eq:SDE_2}) by the the following two-scale SDE \cite{friz2015physical}:
\begin{subeqnarray}\label{eq:SDE_4}
{\rm d}X^{(\epsilon)}_t&=& AX^{(\epsilon)}_t\,{\rm d}t + \frac{\gamma^{1/2}}{\epsilon} M P^{(\epsilon)}_t\,{\rm d}t,\\
{\rm d}P^{(\epsilon)}_t&=& -\frac{1}{\epsilon} M P^{(\epsilon)}_t\,{\rm d}t + {\rm d}W^\dagger_t,
\end{subeqnarray}
where
\begin{equation} \label{eq:matrix_M}
M = \left( \begin{array}{cc} 1 & \beta \\ -\beta & 1\end{array} \right),
\end{equation}
$\beta = 2$ and $\epsilon = 0.01$. The dimension of state space is $d=2$ throughout this section. While we restrict here to the simple two-scale model (\ref{eq:SDE_4}), similar scenarios can arise from deterministic fast-slow systems \cite{KellyMelbourne2015,BM18}.

The associated EnKBF mean-field equations in the parameter $\Theta_t$, which we now denote by $\Theta_t^{(\epsilon)}$ in order to explicitly record its dependence on the scale parameter $\epsilon\ll 1$, become
\begin{subeqnarray} \label{eq:EnKBF_3}
{\rm d} \Theta_t^{(\epsilon)} &=&\frac{\sigma_t^{(\epsilon)}}{\gamma} (A X^{(\epsilon)}_t)^{\rm T} {\rm d}I_t^{(\epsilon)},\\
{\rm d} I_t^{(\epsilon)} &=& {\rm d}X_t^{(\epsilon)} 
- \frac{1}{2} \left( \Theta_t^{(\epsilon)} + \pi_t^{(\epsilon)} [\theta] \right) A X_t^{(\epsilon)} {\rm d}t ,
\end{subeqnarray}
with variance
\begin{equation}
\sigma_t^{(\epsilon)} = \pi_t^{(\epsilon)} \left[(\theta - \pi_t^{(\epsilon)} [\theta])^2  \right] 
\end{equation}
and $\Theta^\epsilon_t \sim \pi_t^{(\epsilon)}$. The discrete-time mean-field EnKBF (\ref{eq:EnKF_2}) turns into
\begin{equation} \label{eq:EnKF_3}
\Theta_{n+1}^{(\epsilon)} = \Theta_n^{(\epsilon)} + K_n^{(\epsilon)} \left\{ \left(X_{t_{n+1}}^{(\epsilon)} - X_{t_n}^{(\epsilon)}
\right) - \frac{1}{2} \left( \Theta_n^{(\epsilon)}  + \pi_n^{(\epsilon)}[\theta] \right) A X_{t_n}^{(\epsilon)} \Delta t \right\} 
\end{equation}
with Kalman gain
\begin{equation}
K_n^{(\epsilon)} = \sigma_n^{(\epsilon)} (AX_{t_n}^{(\epsilon)})^{\rm T} \left( \gamma + \Delta t \sigma_n^{(\epsilon)} 
(AX_{t_n}^{(\epsilon)})^{\rm T} AX_{t_n}^{(\epsilon)} \right)^{-1}\,.
\end{equation}
We also consider the appropriately modified scheme (\ref{eq:EnKF_2b}):
\begin{equation} \label{eq:EnKF_3b}
\Theta_{n+1}^{(\epsilon)} = \Theta_n^{(\epsilon)} + \frac{\sigma_n^{(\epsilon)}}{\gamma} 
\int_{t_n}^{t_{n+1}} (AX_t^{(\epsilon)})^{\rm T} 
{\rm d} X_t^{(\epsilon)} - \frac{1}{2} K_n^{(\epsilon)} A X_{t_n}^{(\epsilon)} 
\left( \Theta_n^{(\epsilon)}  + \pi_n^{(\epsilon)}[\theta] \right) \Delta t.
\end{equation}

In order to understand the impact of the modified data generating process on the two mean-field EnKBF formulations 
(\ref{eq:EnKF_3}) and (\ref{eq:EnKF_3b}), respectively, we follow \cite{friz2015physical} and investigate the difference between 
$X^{(\epsilon)}_t$ and $X^\dagger_t$:
\begin{subeqnarray} \label{eq:deviations}
{\rm d} (X^{(\epsilon)}_t - X^\dagger_t) &=&  A (X^{(\epsilon)}_t - X^\dagger_t){\rm d}t + \frac{\gamma^{1/2}}{\epsilon}M P_t^{(\epsilon)} {\rm d}t
-\gamma^{1/2} {\rm d}W_t^\dagger\\
&=&  A (X_t^{(\epsilon)} - X_t^\dagger){\rm d}t - \gamma^{1/2}{\rm d}P_t^{(\epsilon)} .
\end{subeqnarray}
When $P^{(\epsilon)}_t$ is stationary, it is Gaussian with mean zero and covariance 
\begin{equation}
\mathbb{E}_{\rm stat} \left[P_t^{(\epsilon)} \otimes P_t^{(\epsilon)}\right] =\epsilon \,(M + M^{\rm T})^{-1} = \frac{\epsilon}{2} I .
\end{equation}
Hence $P^{(\epsilon)}_t \rightarrow 0$ as $\epsilon \rightarrow 0$ and also
\begin{equation} \label{eq:convergence1}
X^{(\epsilon)}_t \rightarrow X^\dagger_t
\end{equation}
in $L^2$ uniformly in $t$, provided $\sigma(A)\subset \mathbb{C}_-$ and 
$X^{(\epsilon)}_0 = X^{\dagger}_0$. This is illustrated in Figure \ref{fig1}.

\begin{figure}[!htb]
	\begin{center}
	\includegraphics[width=0.9\textwidth]{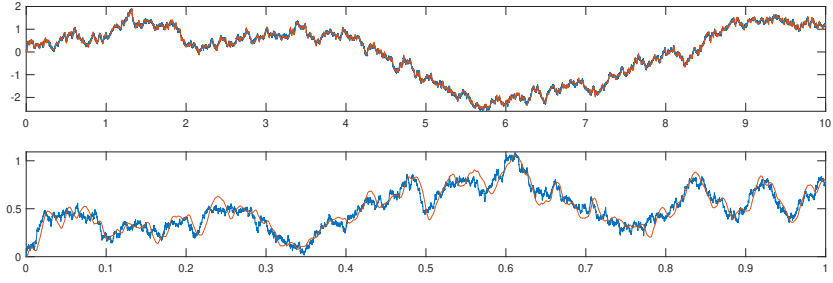}
	\end{center}
	\caption{SDE driven by mathematical vs. physical Brownian motion ($\epsilon = 0.01$). The top panel displays both 
	$X_t^\dagger$ (blue) and $X_t^{(\epsilon)}$ (red) over the long time interval $t\in [0,10]$, while the lower panel provides a zoomed in perspective over the interval $t\in [0,1]$.} \label{fig1}
\end{figure}

In order to investigate the problem further, we study the integral
\begin{equation} \label{eq:integral_eps}
J^{(\epsilon)}_{t_n,t_{n+1}} := \int_{t_n}^{t_{n+1}} (AX_t^{(\epsilon)})^{\rm T} {\rm d}X_t^{(\epsilon)} 
\end{equation}
and its relation to (\ref{eq:integral}). As for (\ref{eq:integral}), we can rewrite (\ref{eq:integral_eps}) as
\begin{equation} 
J^{(\epsilon)}_{t_n,t_{n+1}} = A^{\rm T}: (X^{(\epsilon)}_{t_n} \otimes X^{(\epsilon)}_{t_n,t_{n+1}}) + 
A^{\rm T} : \mathbb{X}_{t_n,t_{n+1}}^{(\epsilon)}.
\end{equation}
We now investigate the limit of the second-order iterated integral 
\begin{subeqnarray} \label{eq:iterated_integral_exact}
\mathbb{X}^{(\epsilon)}_{t_n,t_{n+1}} &=&
 \int_{t_n}^{t_{n+1}} X^{(\epsilon)}_{t_n,t}\otimes {\rm d}X_t^{(\epsilon)}\\
&=&
 \frac{1}{2} X_{t_n,t_{n+1}}^{(\epsilon)} \otimes
X_{t_n,t_{n+1}}^{(\epsilon)} - \frac{1}{2} \int_{t_n}^{t_{n+1}}[ X_{t_n,t}^{(\epsilon)},{\rm d}X_{t}^{(\epsilon)} ]
\end{subeqnarray}
as $\epsilon \to 0$ \cite{friz2015physical}. Here $[.,.]$ denotes the commutator defined by
(\ref{eq:commutator}). 

\begin{proposition} \label{prop2}
\sr{The second-order iterated integral $\mathbb{X}^{(\epsilon)}_{t_n,t_{n+1}}$ satisfies
\begin{equation} 
\lim_{\epsilon \to 0} \mathbb{X}^{(\epsilon)}_{t_n,t_{n+1}} = \mathbb{X}^\dagger_{t_n,t_{n+1}}+ \frac{\Delta t\,\gamma}{2} M
\end{equation}}
\end{proposition}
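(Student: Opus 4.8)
The plan is to decompose $\mathbb{X}^{(\epsilon)}_{t_n,t_{n+1}}$ into its symmetric and antisymmetric parts exactly as in (\ref{eq:iterated_integral_exact}), and to treat the two parts by entirely different means. The symmetric part $\frac{1}{2}X^{(\epsilon)}_{t_n,t_{n+1}}\otimes X^{(\epsilon)}_{t_n,t_{n+1}}$ I will control by the $L^2$ convergence (\ref{eq:convergence1}) together with the It\^o correction hidden in $\mathbb{X}^\dagger_{t_n,t_{n+1}}$, whereas the antisymmetric part (the L\'evy area) $\mathbb{A}^{(\epsilon)}:=-\frac{1}{2}\int_{t_n}^{t_{n+1}}[X^{(\epsilon)}_{t_n,t},{\rm d}X^{(\epsilon)}_t]$ carries the genuine content. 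Throughout I split $M=\frac{1}{2}(M+M^{\rm T})+\frac{1}{2}(M-M^{\rm T})$, recalling that $\frac{1}{2}(M+M^{\rm T})=I$ while $\frac{1}{2}(M-M^{\rm T})$ carries the parameter $\beta$; these two summands will be matched by the symmetric and antisymmetric computations, respectively.

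For the symmetric part I first note that, for fixed $\epsilon>0$, the path $t\mapsto X^{(\epsilon)}_t$ is continuously differentiable, since by (\ref{eq:SDE_4}) its derivative $AX^{(\epsilon)}_t+\gamma^{1/2}\epsilon^{-1}MP^{(\epsilon)}_t$ is continuous in $t$. Hence the iterated integral is an ordinary Riemann--Stieltjes integral and integration by parts gives the symmetric identity recorded in (\ref{eq:iterated_integral_exact}b). By (\ref{eq:convergence1}), $X^{(\epsilon)}_{t_n,t_{n+1}}\to X^\dagger_{t_n,t_{n+1}}$ in $L^2$, so $\frac{1}{2}X^{(\epsilon)}_{t_n,t_{n+1}}\otimes X^{(\epsilon)}_{t_n,t_{n+1}}\to\frac{1}{2}X^\dagger_{t_n,t_{n+1}}\otimes X^\dagger_{t_n,t_{n+1}}$. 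On the other hand, applying It\^o's formula to $X^\dagger_{t_n,t}\otimes X^\dagger_{t_n,t}$ and using that the quadratic variation of $X^\dagger$ equals $\gamma I\,{\rm d}t$, the symmetric part of $\mathbb{X}^\dagger_{t_n,t_{n+1}}$ is $\frac{1}{2}(X^\dagger_{t_n,t_{n+1}}\otimes X^\dagger_{t_n,t_{n+1}}-\gamma\Delta t\,I)$. Subtracting, the limiting symmetric part of $\mathbb{X}^{(\epsilon)}_{t_n,t_{n+1}}-\mathbb{X}^\dagger_{t_n,t_{n+1}}$ equals $\frac{\gamma\Delta t}{2}I=\frac{\gamma\Delta t}{2}\cdot\frac{1}{2}(M+M^{\rm T})$, precisely the symmetric part of the claimed correction.

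For the antisymmetric part I substitute ${\rm d}X^{(\epsilon)}_t=AX^{(\epsilon)}_t\,{\rm d}t+\gamma^{1/2}({\rm d}W^\dagger_t-{\rm d}P^{(\epsilon)}_t)$, which follows from (\ref{eq:SDE_4}). The contributions involving the slow drift $A$ are non-anomalous: the mixed terms pairing a bounded-variation factor against ${\rm d}P^{(\epsilon)}_t$ vanish after integration by parts because $P^{(\epsilon)}_t=\mathcal{O}(\sqrt{\epsilon})$ in $L^2$, and the pure drift terms converge to their $\mathbb{X}^\dagger$ counterparts by (\ref{eq:convergence1}); so the correction is generated entirely by the fast velocity $\gamma^{1/2}\epsilon^{-1}MP^{(\epsilon)}_t$. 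To extract the deterministic shift I evaluate the mean of the area using the stationary autocovariance of the fast Ornstein--Uhlenbeck process, $\mathbb{E}_{\rm stat}[P^{(\epsilon)}_s\otimes P^{(\epsilon)}_t]=\frac{\epsilon}{2}e^{-\epsilon^{-1}M^{\rm T}(t-s)}$ for $s\le t$. Integrating this kernel against the velocity prefactor $\gamma\epsilon^{-2}M(\cdot)M^{\rm T}$ over the interval, the boundary layer of width $\mathcal{O}(\epsilon)$ near $t_n$ disappears in the limit and the fast relaxation collapses to $\frac{\gamma}{2}M$ per unit time, so $\mathbb{E}^\dagger[\mathbb{X}^{(\epsilon)}_{t_n,t_{n+1}}]\to\frac{\gamma\Delta t}{2}M$. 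Its antisymmetric part $\frac{\gamma\Delta t}{2}\cdot\frac{1}{2}(M-M^{\rm T})$ is the remaining, $\beta$-dependent piece of the correction, while $\mathbb{E}^\dagger[\mathbb{X}^\dagger_{t_n,t_{n+1}}]=\mathcal{O}(\Delta t^2)$ by (\ref{eq:estimate_4}).

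The hard part is to upgrade this convergence of expectations to the claimed ($L^2$) convergence of the random variable, i.e.\ to show that the fluctuation $\mathbb{A}^{(\epsilon)}-\mathbb{E}^\dagger[\mathbb{A}^{(\epsilon)}]$ converges to the mean-zero It\^o area $\frac{1}{2}(\mathbb{X}^\dagger_{t_n,t_{n+1}}-(\mathbb{X}^\dagger_{t_n,t_{n+1}})^{\rm T})$. This is a homogenisation statement: although $R^{(\epsilon)}_t:=X^{(\epsilon)}_t-X^\dagger_t\to0$ uniformly in $L^2$, the cross iterated integral $\int_{t_n}^{t_{n+1}}X^\dagger_{t_n,t}\otimes{\rm d}R^{(\epsilon)}_t$ does \emph{not} vanish, because ${\rm d}R^{(\epsilon)}_t=AR^{(\epsilon)}_t\,{\rm d}t-\gamma^{1/2}{\rm d}P^{(\epsilon)}_t$ is rough. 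I would carry this out by splitting $P^{(\epsilon)}$ into the martingale contribution that reconstructs $W^\dagger$, and hence $\mathbb{X}^\dagger$, in the limit, and a remainder that is $\mathcal{O}(\sqrt{\epsilon})$ in $L^2$ uniformly in $t$, controlling the resulting iterated integrals by the It\^o isometry and the exponential mixing $e^{-\epsilon^{-1}M(t-s)}$ of the Ornstein--Uhlenbeck kernel; this is exactly the physical-versus-mathematical Brownian motion estimate of \cite{friz2015physical}, to which the detailed bounds can be deferred. Collecting the symmetric limit $\frac{\gamma\Delta t}{2}\cdot\frac{1}{2}(M+M^{\rm T})$ and the antisymmetric limit $\frac{\gamma\Delta t}{2}\cdot\frac{1}{2}(M-M^{\rm T})$ then yields $\lim_{\epsilon\to0}\mathbb{X}^{(\epsilon)}_{t_n,t_{n+1}}=\mathbb{X}^\dagger_{t_n,t_{n+1}}+\frac{\gamma\Delta t}{2}M$, as claimed.
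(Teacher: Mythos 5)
Your proof is correct in substance and arrives at the right constant, but it is organised quite differently from the paper's argument. The paper (following \cite{friz2015physical}) never splits $\mathbb{X}^{(\epsilon)}_{t_n,t_{n+1}}$ into symmetric and antisymmetric parts: it substitutes ${\rm d}X_t^{(\epsilon)} = {\rm d}X_t^\dagger + A(X_t^{(\epsilon)}-X_t^\dagger){\rm d}t - \gamma^{1/2}{\rm d}P_t^{(\epsilon)}$ from (\ref{eq:deviations}), integrates the $P^{(\epsilon)}$-integral by parts (the boundary term dies because $P^{(\epsilon)}_t = \mathcal{O}(\sqrt{\epsilon})$ in $L^2$), and then the whole correction --- symmetric and antisymmetric at once --- emerges as the ergodic average $\frac{\gamma}{\epsilon}\int_{t_n}^{t_{n+1}} M P_t^{(\epsilon)}\otimes P_t^{(\epsilon)}\,{\rm d}t \to \frac{\Delta t\,\gamma}{\epsilon}M\,\mathbb{E}_{\rm stat}[P^{(\epsilon)}\otimes P^{(\epsilon)}] = \frac{\Delta t\,\gamma}{2}M$, using only the one-time stationary covariance $\frac{\epsilon}{2}I$; see (\ref{eq:second_order_app}). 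Your route buys something the paper's does not make explicit: the symmetric half of the correction, $\frac{\Delta t\,\gamma}{2}I$, is obtained by a complete and elementary argument (the smooth-path identity (\ref{eq:iterated_integral_exact}b), the $L^2$ convergence (\ref{eq:convergence1}) of the increments, and It\^o's formula for $X^\dagger$), and is thereby exposed as the universal It\^o-to-Stratonovich shift that any uniformly convergent smooth approximation produces, independently of $M$; the genuinely model-dependent, $\beta$-dependent anomaly is isolated in the L\'evy area, which is exactly the geometric-rough-path picture behind the paper's later remark that the correction reduces to the Stratonovich one when $M$ or $A$ is symmetric. The price is that your treatment of the area only computes its expectation --- via the two-time autocovariance $\frac{\epsilon}{2}e^{-\epsilon^{-1}M^{\rm T}(t-s)}$ rather than the paper's one-time covariance --- and defers the convergence of the fluctuations to \cite{friz2015physical}; since the paper's own proof defers the analogous law-of-large-numbers step to the same reference, your level of rigour matches the paper's. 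One bookkeeping point if you write this up carefully: the proposition is an exact identity for fixed $\Delta t$, so in the mean computation you should not discard the slow-drift contributions as $\mathcal{O}(\Delta t^2)$ but match them (via the same $L^2$ convergence you already invoke) against the identical $\mathcal{O}(\Delta t^2)$ terms inside $\mathbb{E}^\dagger[\mathbb{X}^\dagger_{t_n,t_{n+1}}]$; otherwise your argument only yields the claim up to $\mathcal{O}(\Delta t^2)$.
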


\begin{proof}
\sr{The proof follows \cite{friz2015physical} and can be summarised as follows:
\begin{subeqnarray} \label{eq:second_order_app}
\mathbb{X}^{(\epsilon)}_{t_n,t_{n+1}} &=& \int_{t_n}^{t_{n+1}} X^{(\epsilon)}_{t_n,t}\otimes {\rm d}X^{(\epsilon)}_t \\
&\rightarrow& \int_{t_n}^{t_{n+1}} X^\dagger_{t_n,t} \otimes  {\rm d} X^\dagger_t-
\gamma^{1/2} \int_{t_n}^{t_{n+1}} X^{(\epsilon)}_{t_n,t}  \otimes {\rm d} P^{(\epsilon)}_t\\
&=& \mathbb{X}^\dagger_{t_n,t_{n+1}} - \gamma^{1/2} X^{(\epsilon)}_{t_n,t_{n+1}} \otimes P^{(\epsilon)}_{t_{n+1}} + 
\gamma^{1/2} \int_{t_n}^{t_{n+1}} {\rm d}X^{(\epsilon)}_t \otimes P^{(\epsilon)}_t \\
& \rightarrow& \mathbb{X}^\dagger_{t_n,t_{n+1}} + \gamma^{1/2} \int_{t_n}^{t_{n+1}} \left\{ A X_t^{(\epsilon)} +
\frac{\gamma^{1/2}}{\epsilon} M P^{(\epsilon)}_t \right\} \otimes  P^{(\epsilon)}_t {\rm d}t \\ 
& \rightarrow& \mathbb{X}^\dagger_{t_n,t_{n+1}} + \frac{\Delta t\,\gamma }{\epsilon}M \,\mathbb{E}_{\rm stat}\left[ P_{t_n}^{(\epsilon)} \otimes  P_{t_n}^{(\epsilon)} \right] \\
&=& \mathbb{X}^\dagger_{t_n,t_{n+1}}+ \frac{\Delta t\,\gamma}{2} M.
\end{subeqnarray}}
\end{proof}

\noindent
As discussed in detail in \cite{CNN2021} already, Proposition \ref{prop2} implies that the scheme (\ref{eq:EnKF_3b}) 
does not, in general, converge to the scheme (\ref{eq:EnKF_3b}) as $\epsilon \to 0$ since
\begin{equation} \label{eq:bias}
J^\dagger_{t_n,t_{n+1}} = \lim_{\epsilon \to 0} J^{(\epsilon)}_{t_n,t_{n+1}} - \frac{\Delta t\,\gamma}{2} A^{\rm T} :M\,.
\end{equation}
This observation suggests the following modification
\begin{subeqnarray} \label{eq:EnKF_3c}
\Theta_{n+1}^{(\epsilon)} &=& \Theta_n^{(\epsilon)} + \frac{\sigma_n^{(\epsilon)}}{\gamma} 
\int_{t_n}^{t_{n+1}} (AX_t^{(\epsilon)})^{\rm T} 
{\rm d} X_t^{(\epsilon)} - \frac{\Delta t}{2} \sigma_n^{(\epsilon)} \,A^{\rm T} : M\,\,-\\
&& \qquad \qquad \frac{1}{2} K_n^{(\epsilon)} A X_{t_n}^{(\epsilon)} 
\left( \Theta_n^{(\epsilon)}  + \pi_n^{(\epsilon)}[\theta] \right) \Delta t
\end{subeqnarray}
to (\ref{eq:EnKF_3b}). 
Please note that it follows from (\ref{eq:iterated_integral_exact}) that
\begin{equation} \label{eq:integral_exact}
\int_{t_n}^{t_{n+1}} (AX_t^{(\epsilon)})^{\rm T} 
{\rm d} X_t^{(\epsilon)} = A^{\rm T} : \left( X_{t_{n+1/2}}^{(\epsilon)} \otimes X_{t_n,t_{n+1}}^{(\epsilon)}
-  \frac{1}{2} \int_{t_n}^{t_{n+1}}[ X_{t_n,t}^{(\epsilon)},{\rm d}X_{t}^{(\epsilon)} ] \right).
\end{equation}

\begin{proposition} 
\sr{The discrete-time EnKBF (\ref{eq:EnKF_3}) converges to (\ref{eq:EnKF_2}) for fixed $\Delta t$ as 
$\epsilon \to 0$.  Similarly, (\ref{eq:EnKF_3c}) converges to (\ref{eq:EnKF_2b}) under the same
limit.}
\end{proposition}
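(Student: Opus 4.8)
The plan is to establish both limits by induction on the time index $n$, using that each discrete scheme is a deterministic recursion driven by the observation path and that the two schemes differ only in how they ingest the data. First I would fix the common initialisation: since $X_0^{(\epsilon)} = X_0^\dagger$ and the samples $\Theta_0^{(\epsilon)}$ and $\Theta_0$ are coupled to the same draws from the prior $\pi_0$, the base case $\Theta_0^{(\epsilon)} = \Theta_0$, $\sigma_0^{(\epsilon)} = \sigma_0$ holds. The inductive hypothesis at step $n$ would be that the conditional law $\pi_n^{(\epsilon)}$ (equivalently, for the affine model, the pair $(\mu_n^{(\epsilon)}, \sigma_n^{(\epsilon)})$) converges, in the $L^2$ sense of (\ref{eq:convergence1}), to its $X^\dagger$-driven counterpart; the variance $\sigma_n^{(\epsilon)}$, being a continuous functional of the law, converges once $\Theta_n^{(\epsilon)}$ does.

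For the first claim, (\ref{eq:EnKF_3}) $\to$ (\ref{eq:EnKF_2}), the decisive observation is that both the update (\ref{eq:EnKF_3}) and its gain $K_n^{(\epsilon)}$ depend on the data only through the point value $X_{t_n}^{(\epsilon)}$ and the increment $X_{t_{n+1}}^{(\epsilon)} - X_{t_n}^{(\epsilon)}$; no second-order iterated integral enters. By the uniform $L^2$ convergence (\ref{eq:convergence1}) these point values and increments converge, and since $\gamma > 0$ keeps the gain denominator bounded away from zero, the map taking $(\Theta_n^{(\epsilon)}, \sigma_n^{(\epsilon)}, X_{t_n}^{(\epsilon)}, X_{t_{n+1}}^{(\epsilon)})$ to $(\Theta_{n+1}^{(\epsilon)}, \sigma_{n+1}^{(\epsilon)})$ is continuous (indeed locally Lipschitz, with all required Gaussian moments finite). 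The inductive step then passes to the limit by continuity, closing the induction.

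For the second claim, (\ref{eq:EnKF_3c}) $\to$ (\ref{eq:EnKF_2b}), the update now also contains $J_{t_n,t_{n+1}}^{(\epsilon)} = \int_{t_n}^{t_{n+1}}(AX_t^{(\epsilon)})^{\rm T}{\rm d}X_t^{(\epsilon)}$, which does not converge to $J_{t_n,t_{n+1}}^\dagger$, so the continuity argument of the first part breaks down and I would instead invoke Proposition \ref{prop2}. By (\ref{eq:bias}), $\lim_{\epsilon \to 0} J_{t_n,t_{n+1}}^{(\epsilon)} = J_{t_n,t_{n+1}}^\dagger + \frac{\Delta t\,\gamma}{2} A^{\rm T}:M$. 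The scheme (\ref{eq:EnKF_3c}) carries precisely the correction $-\frac{\Delta t}{2}\sigma_n^{(\epsilon)} A^{\rm T}:M$; combining the data-driven term $\frac{\sigma_n^{(\epsilon)}}{\gamma} J_{t_n,t_{n+1}}^{(\epsilon)}$ with this correction and passing to the limit, the spurious contribution $\frac{\sigma_n}{\gamma}\cdot\frac{\Delta t\,\gamma}{2}A^{\rm T}:M = \frac{\sigma_n \Delta t}{2}A^{\rm T}:M$ is cancelled exactly, leaving $\frac{\sigma_n}{\gamma} J_{t_n,t_{n+1}}^\dagger$, which is the data-driven term of (\ref{eq:EnKF_2b}). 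All remaining terms depend only on point values and converge as in the first part, so the inductive step closes again.

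The main obstacle is this second claim: because $J^{(\epsilon)}$ is a genuinely discontinuous functional of the observation path in the uniform topology, one cannot pass to the limit by continuity and must route the argument through the explicit bias of Proposition \ref{prop2}, then verify that the correction term is exactly calibrated to annihilate that bias after multiplication by $\sigma_n^{(\epsilon)}/\gamma$. A secondary technical point, to be carried alongside $\Theta_n^{(\epsilon)}$ throughout the induction, is the convergence of the variance iterates $\sigma_n^{(\epsilon)}$ (and, in the Monte-Carlo implementation, of the full empirical measure $\pi_n^{(\epsilon)}$); this is harmless since their updates are built from point values and the already-established convergence of $\Theta_n^{(\epsilon)}$.
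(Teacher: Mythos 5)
Your proposal is correct and takes essentially the same route as the paper: the first limit follows from the path convergence (\ref{eq:convergence1}) applied to the point values, increments, and gain, and the second from substituting the bias identity (\ref{eq:bias}) of Proposition \ref{prop2} into (\ref{eq:EnKF_3c}), where the built-in correction term cancels the spurious contribution exactly. Your induction over $n$ and your tracking of the convergence of $\sigma_n^{(\epsilon)}$ (which the paper simply asserts as the equality $\sigma_n^{(\epsilon)}=\sigma_n$) only make explicit what the paper's two-line proof leaves implicit.
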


\begin{proof}
\sr{The first statement follows from $\sigma_n^{(\epsilon)} = \sigma_n$, the limiting behaviour (\ref{eq:convergence1}), and
\begin{equation}
\lim_{\epsilon \to 0} K_n^{(\epsilon)} = K_n.
\end{equation}
The second statement additionally requires (\ref{eq:bias}) to be substituted into (\ref{eq:EnKF_3c}) 
when taking the limit $\epsilon \to 0$.}
\end{proof}


\begin{remark}
\sr{The analogous adaptation of (\ref{eq:EnKF_3c}) to the gradient descent formulation 
(\ref{eq:SGD_2}) with $X_t^\dagger$ replaced by $X_t^{(\epsilon)}$ becomes
\begin{subeqnarray}
\theta_{n+1}^{(\epsilon)} &=& \theta_n^{(\epsilon)} + \frac{\alpha_{t_n}}{\gamma} \left(
\int_{t_n}^{t_{n+1}} (AX_t^{(\epsilon)})^{\rm T} {\rm d}X_t^{(\epsilon)} - \frac{\gamma \Delta t}{2} A^{\rm T} : M \,\,- \right.\\
& & \qquad \qquad  \left. \theta_n^{(\epsilon)} (AX_{t_n}^{(\epsilon)})^{\rm T} A X_{t_n}^{(\epsilon)} \Delta t \right).
\end{subeqnarray}
Alternatively, subsampling the data can be applied which leads to the simpler formulation
\begin{equation}
\theta_{n+1}^{(\epsilon)} = \theta_n^{(\epsilon)} + \frac{\alpha_{t_n}}{\gamma} 
(AX_{t_n}^{(\epsilon)})^{\rm T} \left( (X_{t_{n+1}}^{(\epsilon)}-X_{t_n}^{(\epsilon)}) - 
\theta_n^{(\epsilon)}  A X_{t_n}^{(\epsilon)} \Delta t \right).
\end{equation}}
\end{remark}

\begin{remark}
\sr{A two-scale SDE, closely related to (\ref{eq:SDE_4}), has been investigated in \cite{BC13} in terms of 
the time integrated autocorrelation function of $P_t^{(\epsilon)}$ and modified stochastic integrals.  
In our case, the modified quadrature rule, here denoted by $\diamond$, has to satisfy
\begin{equation}
\int_{t_n}^{t_{n+1}} (AX_t^\dagger)^{\rm T} \diamond {\rm d}X_t^\dagger =
\lim_{\epsilon \to 0} \int_{t_n}^{t_{n+1}} (AX_t^{(\epsilon)})^{\rm T} {\rm d}X_t^{(\epsilon)} ,
\end{equation}
and it is therefore related to the standard It\^o integral via
\begin{equation}
\int_{t_n}^{t_{n+1}} (AX_t^\dagger)^{\rm T} \diamond {\rm d}X_t^\dagger =
\int_{t_n}^{t_{n+1}} (AX_t^\dagger)^{\rm T}  {\rm d}X_t^\dagger + \frac{\Delta t \gamma }{2} A^{\rm T} : M.
\end{equation}
Hence $M$ playes the role of the integrated autocorrelation function of $P_t^{(\epsilon)}$ in our approach.
We note that the modified quadrature rule reduces to the standard Stratonovitch integral 
if either $\beta = 0$ in (\ref{eq:matrix_M}) or $A$ is symmetric. 
While the results from \cite{BC13} could, therefore, also be used as a starting point for discussing the 
induced estimation bias, practical implementations would still require knowledge of the integrated autocorrelation function 
of $P_t^{(\epsilon)}$ or, equivalently, the estimation of $M$ in addition to observing $X_t^{(\epsilon)}$. 
We address this aspect next.}
\end{remark}

%
\subsection{Numerical implementation}
%

\sr{The numerical implementation of (\ref{eq:EnKF_3c}) requires an estimator for the generally unknown $M$ in (\ref{eq:bias}). This task is challenging as we only have access to $X_t^{(\epsilon)}$ without any explicit knowledge of the underlying generating process (\ref{eq:SDE_4}).} While the estimator proposed in \cite{CNN2021} is based on the idea of subsampling the data, the frequentist perspective taken in this note suggests the alternative
estimator $M_{\rm est}$ defined by
\begin{equation} \label{eq:M_estimator}
\frac{\Delta t \,\gamma}{2} M_{\rm est} =\mathbb{E}^\dagger [ \mathbb{X}^{(\epsilon)}_{t_n,t_{n+1}}],
\end{equation}
which follows from (\ref{eq:second_order_app}f) and (\ref{eq:estimate_4}). That is, $\mathbb{E}^\dagger [\mathbb{X}^\dagger_{t_n,t_{n+1}}] = \mathcal{O}(\Delta t^2)$ for $\Delta t$ sufficiently small. Note that second-order iterated integral
$X_{t_n,t_{n+1}}^{(\epsilon)}$ satisfies (\ref{eq:iterated_integral_exact}) and is therefore easy to compute.
In practice, the frequentist expectation value can be replaced by an approximation along a given single observation path $X^{(\epsilon)}_t$, $t\in [0,T]$, under the assumption of ergodicity. 

An appropriate choice of the outer or sub-sampling step-size $\Delta t$ \cite{PPS09} constitutes an important aspect for the practical implementation of the EnKBF formulation (\ref{eq:EnKF_3}) for finite values of $\epsilon>0$ \cite{nusken2019state}. Consistency of the second-order iterated integrals \cite{davie2008differential} implies
\begin{equation}
\mathbb{X}^{(\epsilon)}_{t_n,t_{n+2}} = \mathbb{X}^{(\epsilon)}_{t_n,t_{n+1}} + \mathbb{X}^{(\epsilon)}_{t_{n+1},t_{n+2}}
+ X^{(\epsilon)}_{t_n,t_{n+1}} \otimes X^{(\epsilon)}_{t_{n+1},t_{n+2}} .
\end{equation}
A sensible choice of $\Delta t$ is dictated by
\begin{equation}
\mathbb{E}^\dagger \left[ X^{(\epsilon)}_{t_n,t_{n+1}} \otimes X^{(\epsilon)}_{t_{n+1},t_{n+2}}\right] = \mathcal{O}(\Delta t^2) \,,
\end{equation}
that is, the sub-sampled data $X_{t_n}^{(\epsilon)}$ behaves to leading order like solution increments from the reference model (\ref{eq:SDE_2}) at scale $\Delta t$ independent of the specific value of $\epsilon$. Note that, on the other hand,
\begin{equation}
\mathbb{E}^\dagger \left[ X^{(\epsilon)}_{\tau_l,\tau_{l+1}} \otimes X^{(\epsilon)}_{\tau_{l+1},\tau_{l+2}}\right] = \mathcal{O}( \epsilon^{-1} \Delta \tau^2) 
\end{equation}
for an inner step-size $\Delta \tau \sim \epsilon$. In other words, a suitable step-size $\Delta t>0$ can be defined by
making
\begin{equation} \label{eq:subsampling_rate}
h(\Delta t) := \Delta t^{-2} 
\left\|\mathbb{E}^\dagger \left[ X^{(\epsilon)}_{t_n,t_{n+1}} \otimes X^{(\epsilon)}_{t_{n+1},t_{n+2}}\right] \right\|
\end{equation}
as small as possible while still guaranteeing an accurate numerical approximation in (\ref{eq:EnKF_3}).

\begin{remark}
The choice of the outer time step $\Delta t$ is less critical for the EnKBF formulation (\ref{eq:EnKF_3c}) since it does not rely on sub-sampling the data and is robust with regard to perturbations in the data provided the appropriate $M$ is explicitly 
available or has been estimated from the available data using (\ref{eq:M_estimator}). 
Furthermore, if $A$ is symmetric, then it follows from (\ref{eq:integral_exact}) and the skew-symmetry of the commutator
$[.,.]$ that
\begin{equation}
\int_{t_n}^{t_{n+1}} (AX_t^{(\epsilon)})^{\rm T} {\rm d} X_t^{(\epsilon)} 
= A: \left(X^{(\epsilon)}_{t_{n+1/2}}\otimes X^{(\epsilon)}_{t_n,t_{n+1}}\right),
\end{equation}
which can be used in (\ref{eq:EnKF_3c}). The same simplification arises when $M$ is symmetric. This insight is at the heart of the geometric rough path approach followed in \cite{CNN2021} and which starts from the Stratonovich formulation (\ref{eq:Strat_formulation}) of the EnKBF. See also \cite{Pathiraja2020} on the convergence of Wong--Zakai approximations for stochastic differential equations. In all other cases, a more refined numerical approximation of the data-driven integral in (\ref{eq:EnKF_3c}) is necessary; such as, for example, (\ref{eq:inner_2b}). For that reason, we rely on the It\^o/Euler--Maruyama interpretation of (\ref{eq:integral_eps}) in this note instead, that is the approximation (\ref{eq:Ito_interpretation}). 
\end{remark}

%
\subsection{Filtered data} \label{sec:filtered}
%

We finally discuss a recently proposed \cite{AGPSZ21} robust modification to the parameter estimation problem 
in the light of the mean-field EnKBF equations considered in this paper. The essential idea is to filter the observation paths 
$X_t^\dagger$, $t \ge 0$, via 
\begin{equation}
{\rm d}Z_t^\dagger = \frac{1}{\delta} (X_t^\dagger - Z_t^\dagger){\rm d}t + \delta_{\rm noise} \sqrt{2} {\rm d}V_t^\dagger,
\end{equation}
where $\delta >0$ is a sufficiently small parameter and $V_t^\dagger$ denotes 
independent Brownian motion with $\delta_{\rm noise} 
= 1$ (noise added) or $\delta_{\rm noise} = 0$ (no noise added). Extending the methodology proposed in \cite{AGPSZ21} to
the mean-field EnKFB equations (\ref{eq:EnKBF_2}), we now consider
\begin{subeqnarray} \label{eq:EnKBF_2m}
{\rm d} \Theta_t &=&\frac{\sigma_t}{\gamma} (A Z^\dagger_t)^{\rm T} {\rm d}I_t,\\
{\rm d} I_t &=& {\rm d}X_t^\dagger 
- \frac{1}{2} \left( \Theta_t + \pi_t[\theta] \right) A X_t^\dagger {\rm d}t ,
\end{subeqnarray}
with the variance $\sigma_t$ defined as before.

Let us first investigate the long-time behaviour of the extended data generating system
\begin{subeqnarray} \label{eq:extended_obs}
{\rm d}X_t^\dagger &=& AX_t^\dagger {\rm d}t + \gamma^{1/2} {\rm d}W_t^\dagger,\\
{\rm d}Z_t^\dagger &=& \frac{1}{\delta} (X_t^\dagger - Z_t^\dagger){\rm d}t + \delta_{\rm noise} \sqrt{2} {\rm d}V_t^\dagger,
\end{subeqnarray}
in some detail. Its stationary distribution is Gaussian with mean $m_\infty^x = m_\infty^z = 0$. 
The stationary covariance matrices satisfy the relations
\begin{subeqnarray}
0 &=& A\Sigma_\infty^{xx} + \Sigma_\infty^{xx} A^{\rm T} + \gamma I,\\
0 &=& \Sigma_\infty^{xz} + \Sigma_\infty^{zx} - 2( \Sigma_\infty^{zz} - \delta_{\rm noise} \delta I),\\
0 &=& A\Sigma_\infty^{xz} + \frac{1}{\delta} (\Sigma_\infty^{xx} - \Sigma_\infty^{xz}).
\end{subeqnarray}
We note that $\Sigma_\infty^{xx} = C$ with the matrix $C$ defined in (\ref{eq:C}) and 
that the symmetric part of $\Sigma_\infty^{zx}$ and $\Sigma_\infty^{xz}$, respectively,
are equivalent to $\Sigma_\infty^{zz} -  \delta_{\rm noise} \delta I$. Hence, following (\ref{eq:approx_1}), 
we again make the crucial approximation
\begin{equation}
(A^{\rm T}A): Z_t^\dagger \otimes X_t^\dagger \approx (A^{\rm T} A) : \Sigma_\infty^{zx} =
(A^{\rm T} A): (\Sigma_\infty^{zz} - \delta_{\rm noise} \delta I)
\end{equation}
for $d\gg 1$. Let us therefore introduce the shorthand 
\begin{equation} \label{eq:Ctilde}
\tilde C = \Sigma_\infty^{zz} -  \delta_{\rm noise} \delta I. 
\end{equation}
We also note that
\begin{equation}
\tilde C = C + \mathcal{O}(\delta).
\end{equation}

The frequentist analysis from Section \ref{sec:frequentist} delivers
\begin{equation}
{\rm d} \mu_t = \frac{\sigma_t}{\gamma} \left( (AZ_t^\dagger)^{\rm T} {\rm d}X_t^\dagger - \mu_t
(A^{\rm T}A) : \tilde C \,{\rm d}t \right)
\end{equation}
for the conditional mean and
\begin{equation}
\frac{\rm d}{{\rm d}t} \sigma_t = -\frac{\sigma_t^2}{\gamma} (A^{\rm T} A) :\tilde C
\end{equation}
for the conditional variance of the random variable $\Theta_t$, as defined by the modified mean-field evolution equations
(\ref{eq:EnKBF_2m}). Furthermore, we find that $\mu_t$ still provides an asymptotically
unbiased estimator since $m_t = \mathbb{E}^\dagger [\mu_t]$ satisfies
\begin{equation}
\frac{\rm d}{{\rm d}t} m_t = \frac{\sigma_t}{\gamma} (A^{\rm T}A): \tilde C\, (1-m_t).
\end{equation}
Similarly, the variance, $p_t$ of the estimator $\mu_t$ satisfies 
\begin{equation}
\frac{\rm d}{{\rm d}t} p_t = \frac{\sigma_t}{\gamma} (A^{\rm T} A):\tilde C \,(\sigma_t - 2p_t).
\end{equation}
In summary, we find that the modified EnKBF mean-field equations (\ref{eq:EnKBF_2m}) behave exactly as the 
original equations (\ref{eq:EnKBF_2}) with the only difference that the stationary covariance matrix $C = \Sigma_\infty^{xx}$
is replaced everywhere by (\ref{eq:Ctilde}). 

Again following \cite{AGPSZ21}, given multi-scale observations $X_t^{(\epsilon)}$, $t \ge 0$, 
we define associated filtered $Z_t^{(\epsilon)}$ via
\begin{equation}
\frac{\rm d}{{\rm d}t} Z_t^{(\epsilon)} = \frac{1}{\delta} (X_t^{(\epsilon)} - Z_t^{(\epsilon)}) + \delta_{\rm noise} \sqrt{2}{\rm d}V_t^\dagger.
\end{equation}
The intriguing observation is that 
\begin{equation}
\tilde J_{t_n,t_{n+1}}^{(\epsilon)} := \int_{t_n}^{t_{n+1}} (AZ_t^{(\epsilon)})^{\rm T} {\rm d}X_t^{(\epsilon)}
\end{equation}
converges to 
\begin{equation}
\tilde J^\dagger_{t_n,t_{n+1}} := \int_{t_n}^{t_{n+1}} (A Z_t^\dagger)^{\rm T} {\rm d} X_t^\dagger
\end{equation}
as $\epsilon \to 0$. This simply follows from the fact that both integrals can be interpreted as standard 
Riemann--Stieltjes integrals and convergence of $X_t^{(\epsilon)} \to X_t^\dagger$ and 
$Z_t^{(\epsilon)} \to Z_t^\dagger$ as $\epsilon \to 0$ in 
the standard topology of continuous functions is sufficient to conclude the convergence of the associated integrals
$\tilde J_{t_n,t_{n+1}}^{(\epsilon)} \to \tilde J_{t_n,t_{n+1}}^\dagger$.  In other words, the extended EnKBF
formulation
\begin{subeqnarray} \label{eq:EnKBF_2mm}
{\rm d} \Theta_t^{(\epsilon)} &=&\frac{\sigma_t^{(\epsilon)}}{\gamma} (A Z^{(\epsilon)}_t)^{\rm T} {\rm d}I_t^{(\epsilon)},\\
{\rm d} I_t^{(\epsilon)} &=& {\rm d}X_t^{(\epsilon)}
- \frac{1}{2} \left( \Theta_t^{(\epsilon)} + \pi_t^{(\epsilon)}[\theta] \right) A X_t^{(\epsilon)} {\rm d}t ,\\
{\rm d}Z_t^{(\epsilon)} &=& \frac{1}{\delta} (X_t^{(\epsilon)} - Z_t^{(\epsilon)}){\rm d}t + \delta_{\rm noise} \sqrt{2} V_t^\dagger
\end{subeqnarray}
will converge to the correct parameter value $\theta^\dagger = 1$ as $t\to \infty$ in the limit $\epsilon \to 0$, that is,
\begin{equation}
\lim_{t\to \infty} \lim_{\epsilon \to 0} \Theta_t^{(\epsilon)} = \theta^\dagger.
\end{equation}
This statement is in line with the results from \cite{AGPSZ21}. The intriguing point is that the data filtering approach does not require knowledge of the rough path correction term implied by (\ref{eq:bias}) while still delivering an unbiased estimator.

%
\section{Numerical example} \label{sec:numerics}
%

We consider the linear SDE (\ref{eq:SDE_2}) with $\gamma = 1$ and
\begin{equation}
A = \frac{-1}{2} \left( \begin{array}{cc} 1 & -1 \\ 1 & 1 \end{array} \right).
\end{equation}
We find that $C = I$ and $A^{\rm T} A  = 1/2I$. Hence $(A^{\rm T}A ):C = 1$, and the posterior variance simply satisfies $\sigma_t = \sigma_0/(1+\sigma_0 t)$ according to (\ref{eq:sigma_exact}). We set $m_{\rm prior} = 0$ and $\sigma_{\rm prior} = 4$ for the 
Gaussian prior distribution of $\Theta_0$, and the observation interval is $[0,T]$ with $T=6$. We find that $\sigma_T = 0.16$. Solving (\ref{eq:mean_1}) for given $\sigma_t$ with initial condition $m_0 = 0$ yields
\begin{equation} \label{eq:Kalman_prediction}
m_t = 1 - \frac{\sigma_t}{\sigma_0}
\end{equation}
and $m_T = 0.96$. The corresponding curves are displayed in red in Figure \ref{fig2}. 

\begin{figure}[!htb]
	\begin{center}
	\includegraphics[width=0.9\textwidth]{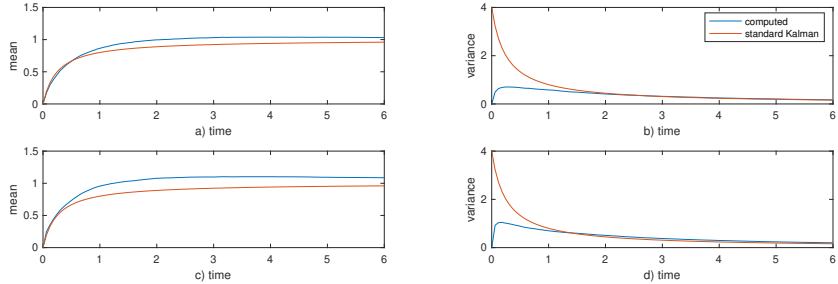}
	\end{center}
	\caption{a)--b): frequentist mean, $m_t$ and variance, $p_t$, from EnKBF implementation (\ref{eq:EnKF_2}) with step-size
	$\Delta t = 0.06$; c)--d): same results from EnKBF implementation (\ref{eq:EnKF_2b}) with inner time-step $\Delta \tau = \Delta t/600$. 
	We also display the curves arising for $\sigma_t$ and $m_t$ from the standard Kalman theory using the approximation (\ref{eq:approx_1}).
	Note that the posterior variance, $\sigma_t$, should provide an upper bound on the frequentist uncertainty $p_t$.} \label{fig2}
\end{figure}

We implement the EnKBF schemes (\ref{eq:EnKF_2}) and (\ref{eq:EnKF_2b}) with $t_n = n\,\Delta t$. 
The inner time-step is $\Delta \tau = 10^{-4}$ while $\Delta t = 0.06$, that is, $L=600$.  We repeat the experiment $N = 10^4$ times and compare the outcome with the predicted mean value of $m_T = 0.96$ and the posterior variance of $\sigma_T = 0.16$ in Figure \ref{fig2}. The differences in the computed time evolutions of $m_t$ and $p_t$ are rather minor and support the idea that it is not necessary to assimilate continuous-time data beyond $\Delta t$. We also find that the simple prediction (\ref{eq:Kalman_prediction}), 
based on standard Kalman filter theory, is not very accurate for this low-dimensional problem ($d=2$). The corresponding 
approximation for $\sigma_t$ provides, however, a good upper bound for $p_t$. 

\begin{figure}[!htb]
	\begin{center}
	\includegraphics[width=0.9\textwidth]{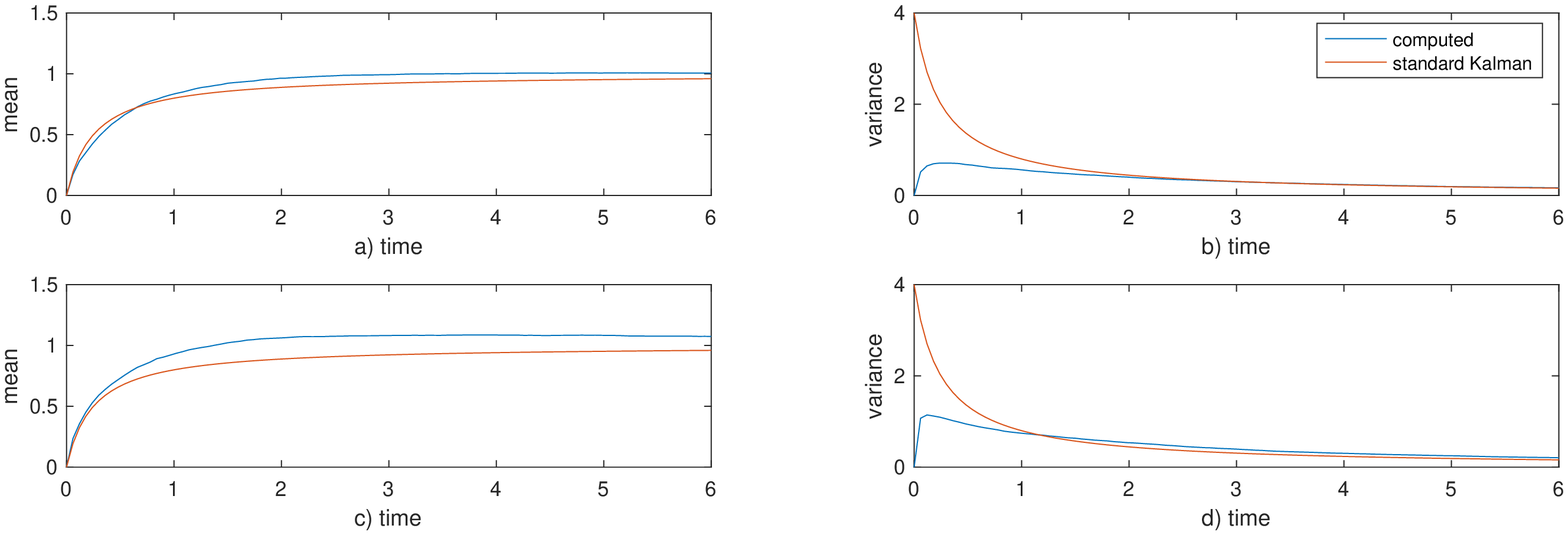}
	\end{center}
	\caption{Same experimental setting as in Figure \ref{fig2} but with the data now generated from the multi-scale SDE (\ref{eq:SDE_4}).
	Again, subsampling the data in intervals of $\Delta t =0.06$ and high-frequency assimilation with step-size $\Delta \tau = 10^{-4}$ lead to 		very similar results in terms of their frequentist means and variances.} \label{fig3}
\end{figure}

We now replace the data generating SDE model (\ref{eq:SDE_2}) by the multi-scale formulation (\ref{eq:SDE_4}) with $\epsilon = 0.01$ and $\beta = 2$. This parameter choice agrees with the one used in \cite{CNN2021}. We again find that assimilating the data at the slow time-scale $\Delta t = 0.06$ leads to very similar results obtained from an assimilation at the fast time-scale $\Delta \tau = 10^{-4}$ with the EnKBF formulation (\ref{eq:EnKF_3c}), provided the correction term resulting from the second-order iterated integral (\ref{eq:bias}) is included. See Figure \ref{fig3}. We also verified numerically that $\Delta t = 0.06$ constitutes a nearly optimal step-size in the sense of making (\ref{eq:subsampling_rate}) sufficiently small while maintaining numerical accuracy. For example, reducing the outer step-size to $\Delta t = 0.02$ leads to $h(0.02)-h(0.06) \approx 10$ in (\ref{eq:subsampling_rate}).

We finally implement the data filtering approaches (\ref{eq:EnKBF_2m}) and (\ref{eq:EnKBF_2mm}) with $\delta = 0.1$ using the true signal $X_t^\dagger$ and its multi-scale representation $X_t^{(\epsilon)}$, respectively. The numerical implementation with 
a step-size $\Delta t = \Delta \tau = 10^{-4}$ resulted in approximations $\mu_{t_n}$ which converged to the true parameter value $\theta^\dagger = 1$ as $t \to T = 6$ without the need for including further corrections terms; as expected from the results in Section \ref{sec:filtered}. 

%
\section{Conclusions} \label{sec:conclusions}
%

In this follow-up note to \cite{CNN2021}, we have investigated the impact of subsampling, data filtering, and high-frequency data assimilation on the corresponding conditional mean estimators, $\mu_t$, both for data generated from the standard SDE model and a modified multi-scale SDE. A frequentist analysis supports the basic finding that all three approaches lead to comparable results provided that the systematic biases due to different second-order iterated integrals are properly accounted for. While the EnKBF is relatively easy to analyse and a full rough path approach can be avoided, extending these results to the nonlinear feedback particle filter \cite{nusken2019state,CNN2021}  will prove more challenging. Extensions to systems without a strong scale separation \cite{Arnold2002,wouters2019stochastic} and applications to geophysical fluid dynamics \cite{Hasselmann1976,CKM11} are also of interest. In this context, the approximation quality of the proposed estimator (\ref{eq:M_estimator}) and the choice of the step-size $\Delta t$ following (\ref{eq:subsampling_rate}) (and potentially $\Delta \tau$) will be of particular interest. Finally, while we have investigated the univariate parameter estimation problem, a semi-parametric parametrisation of the drift term $f$ in (\ref{eq:SDE_1}), such as random feature maps \cite{GottwaldReich21}, lead to high-dimensional parameter estimation problems and their statistics \cite{Ghosal2017book,GineNickl2016book}. This provides another fertile direction for future research.

\paragraph{Acknowledgements.}

SR has been partially funded by Deutsche Forschungsgemeinschaft (DFG) - Project-ID 318763901 - SFB1294 and
Project-ID 235221301 - SFB1114. He would also like to thank Nikolas N\"usken for many fruitful discussions on the subject of this paper. 

%
\bibliography{refs_estimation}
%

\end{document}